\newtheorem{thm}{Theorem}[section]
\newtheorem{lem}[thm]{Lemma}
\newtheorem{cor}[thm]{Corollary}
\newtheorem{re}[thm]{Result}
\newtheorem{question}[thm]{Question}
\newtheorem{example}[thm]{Example}
\newtheorem{remark}[thm]{Remark}
\newcommand{\Z}{\mathbb{Z}}
\newcommand{\C}{\mathbb{C}}
\newcommand{\Q}{\mathbb{Q}}
\DeclareMathOperator{\BH}{ {\rm BH} }
\begin{document}
\title{Bilinear Forms on Finite Abelian Groups and Group-Invariant
 Butson Hadamard Matrices}
\author{Tai Do Duc\\ Division of Mathematical Sciences\\
School of Physical \& Mathematical Sciences\\
Nanyang Technological University\\
Singapore 637371\\
Republic of Singapore\\[5mm]
Bernhard Schmidt\\ Division of Mathematical Sciences\\
School of Physical \& Mathematical Sciences\\
Nanyang Technological University\\
Singapore 637371\\
Republic of Singapore}

\maketitle

 \begin{abstract}
Let $K$ be a finite abelian group
and let $\exp(K)$ denote the least common multiple of the orders of the elements of $K$.
A $\BH(K,h)$  matrix is a $K$-invariant $|K|\times |K|$ matrix $H$ whose entries are complex $h$th
roots of unity such that $HH^*=|K|I$, where $H^*$ denotes the complex conjugate
transpose of $H$, and  $I$ is the identity matrix of order $|K|$.
Let $\nu_p(x)$ denote the $p$-adic valuation of the integer $x$.
Using bilinear forms on $K$, we  show that a $\BH(K,h)$ exists whenever
\begin{itemize}
\item[(i)] $\nu_p(h) \geq \lceil \nu_p(\exp(K))/2 \rceil$ for every prime divisor $p$ of $|K|$ and
\item[(ii)] $\nu_2(h) \ge 2$  if  $\nu_2(|K|)$ is odd and $K$ has a direct factor $\Z_2$.
\end{itemize}
Employing the field descent method, we prove that these conditions are necessary for the existence of  a
$\BH(K,h)$ matrix in the case where $K$ is cyclic of prime power order.
\end{abstract}


\section{Introduction}
Let $U_h$ be the set of complex $h$th roots of unity.
An $n\times n$-matrix $H$ with entries from $U_h$ is called a 
\textbf{Butson Hadamard matrix} if $HH^* = nI$,
where $H^*$ is the complex conjugate transpose of $H$ and $I$ is the identity matrix of order $n$.
We say that $H$ is a {\boldmath$\BH(n,h)$} matrix.
The Ph.D.\ thesis \cite{szoe} of Sz\"{o}ll\H{o}si provides a good overview of most of the known results
on Butson Hadamard matrices and \cite{lam, hir} contain more recent work and survey open problems in this area.

\medskip

The focus of this paper is Butson Hadamard matrices invariant under abelian groups.
Let $(G,+)$ be a finite abelian group of order $n$ with identity element $0$.
An $n\times n$ matrix $A=(a_{g,k})_{g,k\in G}$ is
{\boldmath $G$}\textbf{-invariant} if $a_{g+l,k+l}=a_{g,k}$ for all $g,k,l\in G$.
Such a matrix is sometimes also called \textbf{group invariant} or \textbf{group developed}. 
A $G$-invariant $\BH(n,h)$ matrix is also called a {\boldmath$\BH(G,h)$} \textbf{matrix}.

\medskip

\begin{remark} \label{mult}
For any multiple $h'$ of $h$,
every $\BH(G,h)$ matrix is also a $\BH(G,h')$ matrix, as $U_h\subset U_{h'}$.
\end{remark}

\medskip

By $\Z_n$ we denote the cyclic group of order $n$.
Most existing work on group invariant Butson Hadamard matrices concerns circulant matrices, i.e.,
$\BH(\Z_n,h)$ matrices.
Backelin \cite{bac} came up with the following result.
\begin{re} \label{back}
Let $p$ be a prime and let $n$ be a positive integer such that $n\equiv 0\ (\bmod\ p^2)$ and
$n\not\equiv 2\ (\bmod\ 4)$. Then there is a $\BH(\Z_n,n/p)$ matrix.
\end{re}
We remark that Backelin actually formulated his result in terms of so called
cyclic $n$-roots; Result \ref{back} translates his theorem into the language
of group invariant Butson Hadamard matrices. We further remark that the condition
$n\not\equiv 2\ (\bmod\ 4)$ is missing in the statement of the theorem
in Backelin's paper, but is necessary for his construction to work.
In fact, for instance, it can be shown \cite{leu} that $\BH(\Z_{2p^2},2p)$ matrices do not exist
for any odd prime $p$.
The special case $n=p^2$ of Result \ref{back} was rediscovered in \cite{del}.

\medskip

The main purpose of this paper is to provide a vast generalization and strengthening
of Result \ref{back}. In fact, it turns out that any nondegenerate symmetric bilinear form
on a finite abelian group can be used to construct group invariant Butson Hadamard matrices.
Within our construction, given an abelian group $G$, there is ample freedom to choose ``ingredients''
(the bilinear form, a suitable subgroup of $G$, and a system of coset representatives).

\medskip

There is a well developed theory of bilinear forms on finite abelian groups,
see \cite{mir,wal,wal1}. It is shown in these papers that,
contrary to the case of bilinear forms over finite fields,
in general there is quite a number of inequivalent nondegenerate symmetric bilinear
forms on a finite abelian group.
As \textit{any} of these bilinear forms  can be used
in our construction, this theory turns out to be relevant for
the existence of Butson Hadamard matrices and sheds light on the
above mentioned flexibility of ingredients. We will not discuss
the theory of bilinear forms on finite abelian groups in this paper though,
and just focus on proving the correctness of our construction.

\medskip

Finally, we will show that the conditions which are sufficient
for our construction of $\BH(G,h)$ matrices to work are also necessary
for the existence of these matrices in the case
where $G$ is cyclic of prime power order. 
The proof is an application of the field descent method developed in \cite{sc2}.
We remark that the field descent method relies on the fact that algebraic 
integers in a cyclotomic field $F$ whose squared modulus is an integer
often are contained in proper subfields of $F$, which are, in fact,
are also cyclotomic fields. This method was introduced in
\cite{sc2} and has, for instance, been used to obtain
progress on the Circulant Hadamard Matrix Conjecture \cite{sc2,sc1}
and Lander's Conjecture \cite{leu1}.


\section{The Construction}
For a finite abelian group $G$, we denote the least common multiple of the orders
of the elements of $G$ by $\exp(G)$.
For a positive integer $t$, write $\zeta_t = \exp(2\pi i/t)$.
As before, we denote the cyclic group of order $t$ by $\Z_t$ and identify $\Z_t$
with $\{0,\ldots,t-1\}$, the group operation being addition modulo $t$.

\medskip

Let $G$ be a finite abelian group and let $e$ be a positive integer. We say that
a map $f: G\times G \to \Z_e$ is a \textbf{bilinear form} if
\begin{equation} \label{bilin}
\begin{split}
f(g+h,k) & =f(g,k)+f(h,k) \text{ and }\\
f(g,h+k) &=f(g,h)+f(g,k)
\end{split}
\end{equation}
for all $g,h,k\in G$.
Note that (\ref{bilin}) implies
\begin{equation*}
\begin{split}
f(\alpha g,k) & = \alpha f(g,h) \text{ and }\\
f(g,\alpha h) &=\alpha f(g,h)
\end{split}
\end{equation*}
for all $g,h\in G$ and $\alpha\in \Z$.
If $f(g,h)=f(h,g)$ for all $g,h\in G$, then $f$ is \textbf{symmetric}.
If $f(g,h)=0$, then $g$ and $h$ are said to be \textbf{orthogonal}.
We say that $f$ is \textbf{nondegenerate}
if there is no $g\in G\setminus\{0\}$ such that $f(g,h)=0$ for all $h\in G$.

\medskip

We will use the following conventions. For an abelian group $G$
and $g\in G$, we say that $h\in G$ is a \textbf{square root} of $g$ if $g=2h$
and we write $h= g/2$.
Note that square roots are not unique in general; 
for our purposes, $g/2$ denotes any square root of $g$.
In fact, the construction in Theorem \ref{blc} works
no matter which square roots are chosen.

\medskip

Let $L$ be an elementary abelian group of order $2^{2a+b}$
where $b\in \{0,1\}$ and write $c=2a+b$.
We identify $L$ with $\{(g_1,\ldots,g_c): g_1,\ldots,g_{c}\in \{0,1\}\}$,
 the group operation being componentwise addition modulo $2$.
For $g=(g_1,\ldots,g_c)\in L$,
set $G_1=(g_1,\ldots,g_a)$ and $G_2=(g_{a+1},\ldots,g_{2a})$.
Similarly, define $X_1$ and $X_2$ for $x=(x_1,\ldots,x_c)\in L$.
For $x,y\in \{0,1\}^a$, write $xy^T = \sum_{i=1}^a x_iy_i$.
Note that in this sum we use the addition of integers, not addition modulo $2$.
For instance, if $x=y=(1,1)$, then $xy^T$ is the integer $2$.
Define a function
$s_L: L\to \Z$ by
$$
s_L(g)=
\left\{
\begin{array}{ll}
2 G_1G_2^T & \text{ if } b=0,\\
 2 G_1G_2^T + g_c & \text{ if } b=1.
\end{array}
\right.
$$
For $u,w\in \Z$, set $u\oplus w=0$ if $u+w$ is even and $u\oplus w=1$ if $u+w$ is odd.
For $x,y\in L$, define $x\oplus y$ by $(x\oplus y)_i = x_i\oplus y_i$ for all $i$.
First assume $b=1$. Let $g$ be any element of $L$.
We have
\begin{equation} \label{sl1}
\begin{split}
\sum_{x\in L} \zeta_4^{s_L(x)-s_L(x\oplus g)} &
    = \sum_{x\in L} \zeta_4^{2X_1X_2^T + x_c-2(X_1X_2^T + X_1G_2^T + G_1X_2^T + G_1G_2^T) - (x\oplus g)_c}\\
    & = (-1)^{G_1G_2^T} \sum_{x_c=0}^1 \zeta_4^{x_c-(x_c\oplus g_c)} 
      \sum_{X_1\in \{0,1\}^a}(-1)^{X_1G_2^T} \\
      & \mbox{\hspace{1,9cm}} \sum_{X_2\in \{0,1\}^a}(-1)^{G_1X_2^T}.
\end{split}
\end{equation}
If $G_1\neq (0,\ldots,0)$, then the last sum in (\ref{sl1}) vanishes and
if $G_2\neq (0,\ldots,0)$, then the second last sum in (\ref{sl1}) vanishes.
If $g_c\neq 0$, then $g_c=1$ and
$$
\sum_{x_c=0}^1 \zeta_4^{x_c-(x_c\oplus g_c)} = \zeta_4^{-1} + \zeta_4^{1} =0.
$$
In summary, we have
\begin{equation} \label{sl}
\sum_{x\in L} \zeta_4^{s_L(x)-s_L(x\oplus g)}=0
\end{equation}
whenever $g\neq 0$. If $b=0$, then (\ref{sl}) also holds and is proved in a similar way.

\medskip

\begin{thm} \label{blc}
Let $K=G\times L$ be a finite abelian group,
where either $L=\{0\}$ or $L$ is an elementary abelian $2$-group. Write $e=\exp(G)$.
Let $U$ be a subgroup of $G$ such that every element of $U$ has a square root in $G$.
Suppose that $f: G\times G \to \Z_e$ is bilinear, symmetric, and nondegenerate,
and  that no element of $G\setminus U$ is orthogonal to all elements of $U$.
Let $R\subset G$ be a complete
system of coset representatives of $U$ in $G$ with $0\in R$.
For every $x\in K$, there are unique $x_1\in U$, $x_2\in R$, and $x_3\in L$ with $x = x_1+x_2+x_3$.
Let $\beta$ be any integer coprime to $|G|$.
Define a matrix $H=(H_{y,x})_{y,x\in K}$ by
\begin{equation} \label{blc1}
H_{y,x} =
\zeta_e^{f((x-y)_1/2, (x-y)_1) + \beta f((x-y)_1,(x-y)_2)} \zeta_4^{s_L(x_3\oplus y_3)}.
\end{equation}
Then $H$ is a  $\BH(K,e_1)$ matrix, where
$$e_1=
\left\{
\begin{array}{ll}
\exp(U) &  \text{ if } L=\{0\},\\
{\rm lcm}(2,\exp(U)) & \text{ if $L$ is of square order},\\
{\rm lcm}(4,\exp(U)) & \text{ otherwise}.
\end{array}
\right.
$$
\end{thm}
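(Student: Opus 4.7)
The plan is to establish: (a) every entry of $H$ is an $e_1$th root of unity, and (b) $HH^* = |K|I$. For (a), since $(x-y)_1 \in U$ has order dividing $\exp(U)$, bilinearity yields $\exp(U) \cdot f((x-y)_1/2,(x-y)_1) = 0 = \exp(U) \cdot f((x-y)_1,(x-y)_2)$ in $\Z_e$; hence the first factor of $H_{y,x}$ is an $\exp(U)$th root of unity. The values of $\zeta_4^{s_L(x_3 \oplus y_3)}$ lie in $\{1\}$, $\{\pm 1\}$, or the $4$th roots of unity, according as $L = \{0\}$, $L$ has square order (so $b=0$ and $s_L$ is always even), or $L$ has non-square order; combining with the first factor gives the claimed $e_1$.

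For (b), I will show $\sum_{x \in K} H_{y,x} \overline{H_{y',x}} = 0$ whenever $y \neq y'$. Substituting $w=x-y$ and $z = y-y' \neq 0$, the summand factors into an $L$-part and a $G$-part. Summed over $x_3 \in L$, the $L$-part equals $\sum_{v \in L}\zeta_4^{s_L(v)-s_L(v\oplus z_3)}$, which vanishes when $z_3 \neq 0$ by equation (\ref{sl}). So assume $z_3 = 0$, hence $z \in G\setminus\{0\}$, and it remains to show $\sum_{w \in G}\zeta_e^{E(w)-E(w+z)}=0$, where $E(w) := f(w_1/2,w_1)+\beta f(w_1,w_2)$. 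Decompose $z = z_1+z_2$ with $z_1 \in U$, $z_2 \in R$, and parametrize $w=u+r$ with $u \in U$, $r \in R$; for each $r$, define $u'(r) \in U$ and $r'(r) \in R$ by $r+z_2 = r'(r)+u'(r)$, so $(w+z)_1 = u + z_1 + u'(r)$ and $(w+z)_2 = r'(r)$.

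The key step is to isolate the $u$-dependence of $E(w)-E(w+z)$. Fix any square-root function $\sigma\colon U\to G$ (so $2\sigma(u)=u$), and expand $\sigma(u+z_1+u'(r))=\sigma(u)+\sigma(z_1)+\sigma(u'(r))+\tau$ with $\tau \in G[2]$. Two identities --- both immediate from bilinearity and $U\subseteq 2G$ --- neutralize the $\tau$-dependence: for $u, A \in U$ and $\eta \in G[2]$, one has $f(\sigma(u),A)+f(\sigma(A),u)=f(u,A)$ and $f(u,\eta)=0$. Using these, one computes $E(w)-E(w+z) = f(u,g(r)) + (\text{term independent of }u)$, where $g(r) := (\beta-1)u'(r) - \beta z_2 - z_1 \in G$.

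Consequently the inner sum $\sum_{u \in U}\zeta_e^{f(u,g(r))}$ is a character sum on $U$ that vanishes unless $g(r) \in U^\perp:=\{g\in G:f(g,U)=0\}$, which is contained in $U$ by hypothesis. If $z_2\neq 0$, then $\gcd(\beta,|G|)=1$ forces $\beta z_2 \notin U$, so $g(r) \notin U$ for every $r$ and every inner sum vanishes. If $z_2 = 0$ (so $z = z_1 \neq 0$), then $u'(r)=0$ and $r'(r)=r$, and the total sum factors as
\[
\zeta_e^{-f(\sigma(z_1),z_1)}\Bigl(\sum_{u\in U}\zeta_e^{-f(u,z_1)}\Bigr)\Bigl(\sum_{r\in R}\zeta_e^{-\beta f(z_1,r)}\Bigr).
\]
The middle factor vanishes if $z_1\notin U^\perp$; if $z_1\in U^\perp$, the rightmost factor sums a character on $G/U$ that is nontrivial (since nondegeneracy of $f$ on $G$ and $z_1\neq 0$ force nontriviality on $G$), and so vanishes. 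The main subtlety is the non-additivity of $\sigma$, handled cleanly by the identity $f(u,\eta)=0$ for $u\in U$, $\eta\in G[2]$.
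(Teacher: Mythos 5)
Your proof is correct and follows essentially the same route as the paper's: the same decomposition $x=x_1+x_2+x_3$ into $U$-, $R$-, and $L$-parts, the vanishing of the $L$-sum via (\ref{sl}), an inner character sum over $U$ that forces $g(r)\in U^{\perp}\subseteq U$ (whence $z_2=0$), and an outer sum over $R$ read as a nontrivial character sum on $G/U$ killed by nondegeneracy. The only substantive difference is presentational: you organize it as a direct case analysis on $z=y-y'$ rather than the paper's contradiction argument, and you make explicit the well-definedness issues around the square-root map $\sigma$ (via $f(u,\eta)=0$ for $\eta\in G[2]$), which the paper leaves implicit.
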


\begin{proof}
We first make a remark on the assumption that
every element of $U$ has a square root in $G$, as it will not be mentioned
again in the proof. This assumption is necessary for 
the right hand side of (\ref{blc1}) to be properly defined (note that 
$(x-y)_1/2$ must exist for all $x,y\in K$). 

\medskip

From the definition, it is clear  that $H$ is $G$-invariant.
Fix any $y\in K$.
For every $x\in K$, there is a unique $u(x_2)\in U$ with
\begin{equation} \label{blc2a}
(x-y)_2 = x_2 -y_2 +u(x_2).
\end{equation}
Note that $u(x_2)$ depends on $y$, but we do not indicate this dependence, as we consider $y$ as fixed.
We have
\begin{equation} \label{blc2b}
(x-y)_1 = x_1 -y_1 - u(x_2),
\end{equation}
by (\ref{blc2a}), as $x_1+x_2 - y_1 -y_2 = (x-y)_1 + (x-y)_2$.
Note that $u(x_2)$ only depends on $x_2$ (not on $x_1$), as $(x-y)_2 = (x'-y)_2$ whenever $x_2=x'_2$.
We claim that
\begin{equation} \label{blc3}
\begin{split}
u(x_2) = 0 \text{ whenever } y_2=0.
\end{split}
\end{equation}
Indeed, if $y_2=0$, then $u(x_2) = (x-y)_2 - x_2 = (x_1+x_2+x_3-y_1-y_3)_2 - x_2 = x_2-x_2=0$.

\medskip

By (\ref{blc1}), (\ref{blc2a}), and (\ref{blc2b}), we have
\begin{equation} \label{blc3a}
H_{y,x} =
\zeta_e^{f( (x_1-y_1-u(x_2))/2, x_1-y_1-u(x_2)) + \beta f(x_1-y_1-u(x_2),x_2-y_2+u(x_2))} \zeta_4^{s_L(x_3\oplus y_3)}.
\end{equation}
Note that the $(0,y)$ entry of $HH^*$ is
\begin{equation*}
\begin{split}
A(y): & = \sum_{x \in K}H_{0,x}\overline{H_{y,x}}\\
& = \sum_{x_3\in L} \sum_{x_2\in R}\sum_{x_1\in U} H_{0,x_1+x_2+x_3}\overline{H_{y,x_1+x_2+x_3}}.
\end{split}
\end{equation*}
Substituting (\ref{blc3a}) into the last expression and 
using the bilinearity of $f$, we get
\begin{equation*}
\begin{split}
A(y) & = \eta \sum_{x_3\in L} \zeta_4^{s_L(x_3)-s_L(x_3\oplus y_3)}
     \sum_{x_2\in R} \zeta_e^{T(x_2)} \sum_{x_1\in U} \zeta_e^{S(x_1,x_2)},
\end{split}
\end{equation*}
where
\begin{eqnarray*}
\eta & = & \zeta_e^{-f(y_1/2,y_1)-\beta f(y_1,y_2)},\\
T(x_2) & = &     \beta f(x_2,y_1+u(x_2)) \\
                && + f(u(x_2),-u(x_2)/2 -y_1 +\beta(u(x_2)+y_1-y_2)), \\
S(x_1,x_2)  & = & f(x_1,y_1+\beta y_2+ (1- \beta ) u(x_2)).
\end{eqnarray*}

\medskip

From now on, we assume $A(y)\neq 0$. Our goal is to show that this implies $y=0$.
First of all, note that $A(y)\neq 0$ implies
$
\sum_{x_3\in L} \zeta_4^{s_L(x_3)-s_L(x_3\oplus y_3)} \neq 0
$
and thus $y_3=0$ by (\ref{sl}).
Hence the first sum in the expression for $A(y)$ is equal to $|L|$ and
\begin{equation} \label{blc4}
A(y) = \eta |L|
     \sum_{x_2\in R} \zeta_e^{T(x_2)} \sum_{x_1\in U} \zeta_e^{S(x_1,x_2)}.
\end{equation}

\medskip

Write $V(x_2)=\sum_{x_1\in U} \zeta_e^{S(x_1,x_2)}$.
Note that $S(x_1+z_1,x_2)=S(x_1,x_2)+S(z_1,x_2)$ for all $x_1,z_1\in U$, since
$f$ is bilinear. Suppose that there is $z_1\in U$ such that
$S(z_1,x_2)\neq 0$ and thus $\zeta_e^{S(z_1,x_2)}\neq 1$.
As
$$
\zeta_e^{S(z_1,x_2)} V(x_2) = \sum_{x_1\in U} \zeta_e^{S(x_1+z_1,x_2)}
= \sum_{x_1\in U} \zeta_e^{S(x_1,x_2)} = V(x_2),
$$
we conclude $V(x_2)=0$. So we see that $V(x_2)$ is only nonzero
if
\begin{equation} \label{blc5}
 S(x_1,x_2) = f(x_1,y_1+\beta y_2+ (1- \beta ) u(x_2))=0
\end{equation}
for all $x_1\in U$.
As no element of $G\setminus U$ is orthogonal to all elements of $U$ by assumption,
this implies $y_1+\beta y_2+ (1- \beta ) u(x_2)\in U$ and thus $y_2\in U$, as $y_1, u(x_2)\in U$ and $\beta$ is coprime to $|G|$.
This implies $y_2=0$, as $y_2\in R\cap U$ and $R\cap U = \{0\}$.
 Hence we have $y_2=0$ whenever $V(x_2)\neq 0$.
As $A(y)\neq 0$ by assumption, we have $V(x_2)\neq 0$ for at least one $x_2$ and thus
\begin{equation} \label{blc6}
y_2=0.
\end{equation}
Hence
\begin{equation} \label{blc6a}
u(x_2)=0
\end{equation}
for all $x_2\in R$  by (\ref{blc3}). Combining (\ref{blc5}),  (\ref{blc6}), and (\ref{blc6a}), we get
\begin{equation} \label{blc7}
f(x_1,y_1)=0  \text{ for all } x_1\in U
\end{equation}
and
\begin{equation} \label{blc7a}
A(y) = \eta |L| |U|  \sum_{x_2\in R} \zeta_e^{T(x_2)}
     =  \eta |L| |U|  \sum_{x_2\in R} \zeta_e^{\beta f(y_1,x_2)}.
\end{equation}
Hence $W(y):=\sum_{x_2\in R} \zeta_e^{\beta f(y_1,x_2)}\neq 0$.

\medskip

Now suppose that there is $z_2\in R$ such that $f(y_1,z_2)\neq 0$.
Note that the map $R\to R,\ x_2\mapsto (x_2+z_2)_2$ is a bijection,
as the elements of $R$ represent each coset of $U$ in $G$ exactly once.
Moreover, for any $r,s\in R$, we have
\begin{equation*}
\begin{split}
 f(y_1,r+s) &=  f( y_1, (r+s)_1 + (r+s)_2) \\
& = f(y_1, (r+s)_2),
\end{split}
\end{equation*}
since $f(y_1,(r+s)_1)=0$ by (\ref{blc7}). Thus
\begin{equation*}
\begin{split}
\zeta^{\beta f(y_1,z_2)} W(y) & =  \sum_{x_2\in R} \zeta_e^{\beta f(y_1,x_2+z_2)}\\
& = \sum_{x_2\in R} \zeta_e^{\beta f(y_1,(x_2+z_2)_2)}\\
& = \sum_{x_2\in R} \zeta_e^{\beta f(y_1,x_2)} = W(y).
\end{split}
\end{equation*}
As $\zeta_e^{\beta f(y_1,z_2)}\neq 1$, we conclude $W(y)=0$, a contradiction.
Hence we have $f(y_1,x_2)=0$ for all $x_2\in R$.
This, together with (\ref{blc7}), implies $f(y_1,x)=f(y_1,x_1+x_2) = f(y_1,x_1)+f(y_1,x_2)=0$ for all $x\in G$.
Thus $y_1=0$, as $f$ is nondegenerate.
So we have  $y_3=0$, $y_2=0$, and $y_1=0$, that is, $y=0$, as desired.

\medskip

In summary, we have shown that the $(0,y)$ entry of $HH^*$ is only nonzero if $y=0$.
As $H$ is $K$-invariant, this shows that the $(x,y)$ entry of $HH^*$ is only nonzero
if $x=y$. Hence $HH^* = |K|I$, where $I$ is the identity matrix of order $|K|$.

\medskip

It remains to prove that the entries of $H$ are $e_1$th roots of unity,
where $e_1$ is defined in the statement of the theorem.
Recall that
\begin{equation} \label{blc9a}
H_{y,x} = W_{y,x} \zeta_4^{s_L(x_3\oplus y_3)},
\end{equation}
where $W_{y,x}= \zeta_e^{f((x-y)_1/2, (x-y)_1) + f((x-y)_1,(x-y)_2)}$.
Write $k=\exp(U)$.
As $kx=0$ for all $x\in U$, we have $kf(x,y)=f(kx,y)=f(y,kx)=0$ for all $x\in U$ and $y\in G$
by the bilinearity of $f$.
As $(x-y)_1\in U$ for all $x,y\in G$, this shows that
\begin{equation} \label{blc9}
W_{y,x}^k = \zeta_e^{f((x-y)_1/2, k(x-y)_1) + f(k(x-y)_1,(x-y)_2)} =\zeta_e^0=1
\end{equation}
for all $x,y\in K$.
If $L=\{0\}$, then $e_1=k$ and $H_{y,x}=W_{y,x}$ and thus $H_{y,x}^{e_1}=1$ for all $x,y\in K$
by (\ref{blc9}).
If $L$ is of square order, then $e_1={\rm lcm}(2,k)$,  $s_L(x_3\oplus y_3)\equiv 0\ (\bmod\ 2)$
and thus $H_{y,x}^{e_1} =1$ for all $x,y\in K$ by (\ref{blc9a}) and (\ref{blc9}).
Note that, in any case,  $H_{y,x}^{{\rm lcm}(4,k)} =1$ for all $x,y\in K$ by (\ref{blc9a}) and (\ref{blc9}).
Hence $H$ indeed is a $\BH(G,e_1)$ matrix.
\end{proof}

\medskip

\begin{remark} \rm
In Theorem \ref{blc}, there are alternative choices for the function $s_L$ for which the
construction still works. Write $|L|=2^{2a+d}$ and $c=2a+d$, 
where $a$ and $d$ are any nonnegative integers.
Recall that $F: (\Z_2)^{2a}\to \Z_2$ is a \textbf{bent function} if
$$
\left|\sum_{x\in (\Z_2)^{2a}}(-1)^{F(x)+ \alpha x^T}\right| = 2^a
$$
for all nonzero $\alpha \in (\Z_2)^{2a}$.
Bent functions exist in abundance, see \cite{ro1,ro2}, for instance.
Let $F: (\Z_2)^{2a}\to \Z_2$ be any bent function and set
\begin{equation} \label{general}
s_L(g_1,\ldots,g_c) = 2F(g_1,\ldots,g_{2a}) + \sum_{i=2a+1}^c g_i.
\end{equation}
Then (\ref{blc1}), with $s_L$ defined by (\ref{general}), still is a $\BH(K,e_1)$ matrix.
We omit the proof here, which is a straightforward extension of the proof of Theorem \ref{blc}.
\end{remark}

\medskip

For a prime $p$ and an integer $t$,
let $\nu_p(t)$ denote the $p$-adic valuation of $t$, that is, $p^{\nu_p(t)}$ is the
largest power of $p$ dividing $t$.
For  groups $K$ and $W$, we say that $K$ has a \textbf{direct factor}
{\boldmath $W$} if $K \cong W\times V$ for some group $V$.

\medskip

\begin{cor} \label{main_constr}
Let $K$ be a finite abelian group and let $h$ be a positive integer such that
\begin{eqnarray}
\label{main_constr1}
\text{$\nu_p(h) \geq \lceil \nu_p(\exp(K))/2 \rceil$ for every prime divisor $p$ of $|K|$,}\\
\label{main_constr1a}
\nu_2(h) \ge 2 \text{ if  $\nu_2(|K|)$ is odd and $K$ has a direct factor $\Z_2$}.
\end{eqnarray}
Then there exists a $\BH(K,h)$ matrix.
\end{cor}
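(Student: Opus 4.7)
The plan is to construct an explicit instance of Theorem \ref{blc} and then invoke Remark \ref{mult} to pass from a $\BH(K,e_1)$ matrix to a $\BH(K,h)$ matrix. The first step is to decompose each Sylow subgroup of $K$ into cyclic factors and let $L$ be the direct product of all the $\Z_2$-summands appearing; the remaining cyclic factors form $G = \prod_p G_p$, with $G_p = K_p$ for odd $p$ and $G_2 = \prod_i \Z_{2^{a_i^{(2)}}}$ ranging over the indices with $a_i^{(2)} \geq 2$. This makes $L$ elementary abelian and strips every $\Z_2$ direct factor out of $G$, which is essentially forced: if $G$ had a $\Z_2$ direct factor, then any nonzero element it contributed to $U$ would lack a square root in $G$.

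On each $G_p = \prod_i \Z_{p^{c_i}}$ (with $c_1 \geq c_2 \geq \cdots$) I would take the standard diagonal bilinear form
$$ f_p(x,y) \;=\; \sum_i p^{\,c_1 - c_i}\,\tilde x_i\tilde y_i \pmod{p^{c_1}} $$
on integer lifts, and glue the $f_p$ via the CRT isomorphism $\Z_{\exp(G)} \cong \prod_p \Z_{\exp(G_p)}$ into a symmetric, bilinear, nondegenerate form $f : G \times G \to \Z_{\exp(G)}$. The subgroup of choice is $U = \prod_p U_p$ with $U_p = \prod_i p^{\lfloor c_i/2\rfloor}\,\Z_{p^{c_i}}$. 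A direct computation gives $U_p^\perp = \prod_i p^{\lceil c_i/2\rceil}\,\Z_{p^{c_i}} \subseteq U_p$ (because $\lfloor c_i/2\rfloor \leq \lceil c_i/2\rceil$), so no element of $G \setminus U$ is orthogonal to every element of $U$. Every generator of $U_p$ has a square root in $G_p$: for odd $p$ because $2$ is a unit modulo $p^{c_i}$, and for $p = 2$ because every $c_i$ occurring in $G_2$ is at least $2$, hence $\lfloor c_i/2\rfloor \geq 1$.

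Theorem \ref{blc} then produces a $\BH(K,e_1)$ matrix, and the remaining task is to verify $e_1 \mid h$. By construction $\nu_p(\exp(U)) = \lceil \nu_p(\exp(G))/2 \rceil \leq \lceil \nu_p(\exp(K))/2 \rceil \leq \nu_p(h)$ from hypothesis (i), so $\exp(U) \mid h$. Writing $|L| = 2^{t_0}$: if $t_0$ is even then $e_1 = {\rm lcm}(2,\exp(U))$, and either $t_0 = 0$ (nothing further to check) or $\nu_2(\exp(K)) \geq 1$ and (i) already forces $2 \mid h$. If $t_0$ is odd then $e_1 = {\rm lcm}(4,\exp(U))$; here either $\nu_2(\exp(K)) \geq 3$, so (i) immediately gives $4 \mid h$, or $\nu_2(\exp(K)) \leq 2$, in which case $\nu_2(|K|) = 2r + t_0$ with $r$ the number of $\Z_4$-summands of $K$, so $\nu_2(|K|)$ is odd; combined with the $\Z_2$ direct factor guaranteed by $t_0 \geq 1$, condition (ii) supplies $4 \mid h$. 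Remark \ref{mult} then upgrades the $\BH(K,e_1)$ to a $\BH(K,h)$.

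The main obstacle is precisely this final parity check, where conditions (i) and (ii) must dovetail through the awkward threshold $\exp(K_2) \leq 4$; the rest of the argument is a routine verification once the right decomposition $K = G \times L$ and the diagonal bilinear form are in hand.
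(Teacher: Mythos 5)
Your proposal is correct and follows essentially the same route as the paper: the same decomposition $K=G\times L$ with all $\Z_2$ direct factors absorbed into $L$, the same subgroup $U$, the same diagonal bilinear form (the paper writes it as the single global sum $\sum_i \frac{e}{p_i^{a_i}}g_ih_i$ rather than gluing per-prime forms by CRT, but these coincide), and an equivalent final case analysis for $e_1\mid h$. No gaps.
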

\begin{proof}
Write $K=G\times L$, where either $L=\{0\}$ or $L$ is an elementary abelian $2$-group,
such that $G$ does not have a direct factor $\Z_2$.
Without loss of generality, we can assume
$G = \Z_{p_1^{a_1}} \times \cdots \times \Z_{p_s^{a_s}}$, where $p_1,\ldots,p_s$ are
(not necessarily distinct) primes and $a_1,\ldots,a_s$ are positive integers.
Note that $p_i^{a_i}\neq 2$ for all $i$, as $G$ has no direct factor $\Z_2$.
Furthermore, $|L|>1$ if and only if $K$ has a direct factor $\Z_2$.
We identify $G$ with
$$
 \{(g_1,\ldots,g_s): 0\le g_i \le p_i^{a_i}-1,\ i=1,\ldots,s\},
$$
where the group operation is componentwise addition, the $i$th component being taken
modulo $p_i^{a_i}$. Let $U$ be the subgroup of $G$ given by
$$
U = \left\{\left(p_1^{\lfloor a_1/2 \rfloor}k_1,\ldots, p_s^{\lfloor a_s/2 \rfloor}k_s\right):
       0\le k_i \le p_i^{\lceil a_i/2 \rceil}-1,\ i=1,\ldots,s\right\}.
$$
Write $e=\exp(G)$ and define a map $f:G\times G\to \Z_e$ by
$$
f((g_1,\ldots,g_s), (h_1,\ldots,h_s)) = \sum_{i=1}^s \frac{e}{p_i^{a_i}} g_ih_i .
$$
It is straightforward to verify that $f$ is bilinear and symmetric.
Let $g=(g_1,\ldots,g_s)$ be any nonzero element of $G$.
Then $g_i\neq 0$ for some $i$. Let $t$ be the element of $G$ with $t_i=1$
and $t_j=0$ for all $j\neq i$.
Then
$f(g,t) = \frac{e}{p_i^{a_i}} g_i \neq 0$, since $g_i\not\equiv 0\ (\bmod\ p_i^{a_i})$.
This shows that $f$ is nondegenerate.

\medskip

Suppose that $g=(g_1,\ldots,g_s)$ an element which orthogonal to all elements of $U$.
Let $i$ be arbitrary and let $s$ be the element of $U$ with $s_i=p_i^{\lfloor a_i/2 \rfloor}$ and $s_j=0$
for all $j\neq i$. Then
$$
f(g,s) =  \frac{e}{p_i^{a_i}} g_i p_i^{\lfloor a_i/2 \rfloor} = \frac{e}{ p_i^{\lceil a_i/2 \rceil}}g_i=0
$$
and thus $g_i \equiv 0\ (\bmod\ p_i^{\lceil a_i/2 \rceil})$. This implies $g\in U$.
Hence there is no element of $G\setminus U$ which is orthogonal to all elements of $U$.

\medskip

Next, we show that every element $u=(u_1,\ldots,u_s)$
of $U$ has a square root in $G$.
If $p_i=2$, then $a_i\ge 2$, as we are assuming $p_i^{a_i}\neq 2$ for all $i$.
Thus $u_i\equiv 0\ (\bmod\ 2)$ by the definition of $U$, that is, $u_i=2y_i$
with $y_i\in \{0,\ldots,2^{a_i-1}-1\}$.
On the other hand, if $p_i$ is odd, then the map $x\mapsto 2x\ (\bmod\ p_i^{a_i})$
is a bijection and thus there is $y_i\in \{0,\ldots,p^{a_i}-1\}$ with
$2y_i \equiv u_i\ (\bmod\ p_i^{a_i})$.
In summary, we have $u=2y$, i.e., $y$ is a square root of $u$.

\medskip

We have shown that all assumptions of Theorem \ref{blc} are satisfied.
Hence there exists a $\BH(K,e_1)$ matrix, where $e_1$
is defined in Theorem \ref{blc}.
In view of Remark \ref{mult}, it suffices to show
\begin{equation} \label{main_constr3}
h\equiv 0\ (\bmod\ e_1).
\end{equation}
Note that
\begin{equation} \label{main_constr2}
\exp(U) = {\rm lcm}\left(p_i^{\lceil a_i/2 \rceil}: i=1,\ldots,s\right)
\end{equation}
and $h\equiv 0\ \left(\bmod\ p_i^{\lceil a_i/2 \rceil}\right)$ by (\ref{main_constr1}).
Hence
\begin{equation} \label{main_constr4}
h\equiv 0\ (\bmod\ \exp(U)).
\end{equation}
If $L=\{0\}$, then $e_1=\exp(U)$ and (\ref{main_constr3}) follows from (\ref{main_constr4}).
Now assume $|L|>1$. If $\nu_2(\exp(K))\ge 3$, then $h \equiv 0\ (\bmod\ 4)$ by
(\ref{main_constr1}) and thus (\ref{main_constr3}) holds by (\ref{main_constr4}),
as $e_1$ divides ${\rm lcm}(4,\exp(U))$.
Hence we can assume $\nu_2(\exp(K))\le 2$, i.e., $a_i=2$ whenever $p_i=2$.
Note that this implies that $\nu_2(|G|)$ is even.
Suppose that $\nu_2(|K|)$ is even.
Then $\nu_2(|L|)$ is also even, that is, $L$ is of square order.
Thus $e_1={\rm lcm}(2,\exp(U))$ and (\ref{main_constr3})
follows from (\ref{main_constr1}) and (\ref{main_constr4}).
Finally, suppose that $\nu_2(|K|)$ is odd.
Note that $K$ has a direct factor $\Z_2$, as we are assuming $|L|>1$.
Thus $h\equiv 0\ (\bmod\ 4)$ by (\ref{main_constr1a}), which implies (\ref{main_constr3}).
Thus (\ref{main_constr3}) holds in every case and this completes the proof.
\end{proof}

\medskip

\begin{remark} \rm
Corollary \ref{main_constr} only provides one possible choice of the bilinear form $f$
and the subgroup $U$. In general, there are numerous other choices,
which produce group invariant Butson Hadamard matrices not equivalent to those
constructed in Corollary \ref{main_constr}.
\end{remark}

\medskip

\begin{cor} \label{circ}
If $v$ and $h$ are positive integers with
\begin{itemize}
\item[$(i)$]  $\nu_p(h) \geq \lceil \nu_p(v)/2 \rceil$ for every prime divisor $p$ of $v$ and
\item[$(ii)$] $\nu_2(h) \ge 2$  if  $v\equiv 2\ (\bmod\ 4)$,
\end{itemize}
then a (circulant) $\BH(\Z_v,h)$ matrix exists.
\end{cor}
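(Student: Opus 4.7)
The plan is simply to specialize Corollary~\ref{main_constr} to the cyclic group $K=\Z_v$. Since $\exp(\Z_v)=v$, the $p$-adic valuations of $\exp(K)$ and of $v$ agree for every prime $p$, so hypothesis~(i) of the present corollary is literally hypothesis~(\ref{main_constr1}) of Corollary~\ref{main_constr}. It remains to translate hypothesis~(\ref{main_constr1a}) into the numerical statement (ii).

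The only mildly delicate point is identifying when $\Z_v$ admits $\Z_2$ as a direct factor. Writing $v=2^a m$ with $m$ odd gives $\Z_v\cong \Z_{2^a}\times \Z_m$, and the $2$-part $\Z_{2^a}$ has $\Z_2$ as a direct summand if and only if $a=1$, i.e.\ $v\equiv 2\pmod 4$. In that case $\nu_2(|K|)=1$ is odd, so the compound premise of~(\ref{main_constr1a}) is triggered and demands $\nu_2(h)\ge 2$, which is exactly hypothesis~(ii). In every other situation---either $a=0$ or $a\ge 2$---the group $\Z_v$ admits no direct factor $\Z_2$, and~(\ref{main_constr1a}) is vacuous regardless of the parity of $\nu_2(v)$.

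Hence the hypotheses of Corollary~\ref{main_constr} are satisfied with $K=\Z_v$, and it produces a $\BH(\Z_v,h)$ matrix; being $\Z_v$-invariant, such a matrix is circulant by definition, finishing the proof. There is no genuine obstacle here---the entire content is the elementary observation characterizing when $\Z_2$ is a direct summand of a cyclic group, after which the result is a direct invocation of Corollary~\ref{main_constr}.
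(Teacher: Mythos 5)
Your proof is correct and is exactly the intended derivation: the paper states this corollary without proof as an immediate specialization of Corollary~\ref{main_constr} to $K=\Z_v$, and your observation that $\Z_v$ has a direct factor $\Z_2$ precisely when $v\equiv 2\pmod 4$ (with hypothesis~(\ref{main_constr1a}) vacuous otherwise) is the only point that needed checking.
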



\section{Necessary Conditions}
From now on, we use the language of group ring equations to study group invariant Butson Hadamard matrices
and write groups multiplicatively.
Let $G$ be a finite abelian group, let $R$ be a ring and let
$R[G]$ denote the group ring of $G$ over $R$. The elements of $R[G]$ have
the form $X=\sum_{g \in G} a_gg$ with $a_g \in R$.
The $a_g$'s are called the \textbf{coefficients} of $X$.
Two elements $X=\sum_{g \in G} a_gg$ and $Y=\sum_{g\in G}b_gg$ in $R[G]$
are equal if and only if $a_g=b_g$ for all $g\in G$.
A subset $S$ of $G$ is identified with the group ring element $\sum_{g\in S}g$.
For the identity element $1_G$ of $G$ and $\lambda\in R$,  we write $\lambda$
for the group ring element $\lambda 1_G$.
For $R=\mathbb{Z}[\zeta_h]$ and $X=\sum_{g \in G}a_gg\in R[G]$,
we write
$$
X^{(-1)}=\sum_{g\in G} \overline{a_g} g^{-1},
$$
where $\overline{a_g}$ denotes the complex conjugate of $a_g$.

\medskip

The group of complex characters of $G$ is denoted by $\hat{G}$.
The \textbf{trivial character}  $\chi_0$ is defined by $\chi_0(g)=1$ for all $g\in G$.
For $D=\sum_{g\in G} a_gg \in R[G]$ and $\chi \in \hat{G}$, write $\chi(D)=\sum_{g\in G} a_g\chi(g)$.
The following is a standard result and a proof can be found \cite[Ch.\ VI, Lem. 3.5]{bethjung}, for instance.

\begin{re} \label{fourier}
Let $G$ be a finite abelian group and $D=\sum_{g\in G} a_gg \in \mathbb{C}[G]$.
Then $$a_g=\frac{1}{|G|}\sum_{\chi \in \hat{G}} \chi(Dg^{-1})$$
for all $g\in G$.
Consequently, if $D, E \in \mathbb{C}[G]$ and $\chi(D)=\chi(E)$ for all $\chi \in \hat{G}$, then $D=E$.
\end{re}

\begin{lem} \label{div}
Let $p$ be a prime and let $m$ be a positive integer not divisible by $p$.
Write $\zeta=\zeta_p$ if $p$ is odd and $\zeta=\zeta_4$ if $p=2$.
If $X\in \Z[\zeta_m]$ satisfies $X\equiv 0\ (\bmod\ 1-\zeta)$, then $X\equiv 0\ (\bmod\ p)$.
\end{lem}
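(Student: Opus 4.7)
The plan is to reduce the lemma to a quotient-ring identification. Write $F=\Q(\zeta_m)$ and $E=\Q(\zeta_m,\zeta)$, and let $\mathcal{O}_F=\Z[\zeta_m]$ and $\mathcal{O}_E$ be the rings of integers. The hypothesis is that $X=(1-\zeta)Y$ for some $Y\in\Z[\zeta,\zeta_m]\subseteq\mathcal{O}_E$, and we want to conclude that $X\in p\,\mathcal{O}_F$. The main input is the two classical identities
\[
\prod_{k=1}^{p-1}(1-\zeta_p^k)=\Phi_p(1)=p\qquad\text{and}\qquad (1-i)(1+i)=2,
\]
together with the fact that $\gcd(p,m)=1$ forces $\zeta\notin F$.

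First I would set up the Galois action. Because $\gcd(p,m)=1$, we have $F\cap\Q(\zeta)=\Q$, so $\mathrm{Gal}(E/F)$ is naturally identified with $\mathrm{Gal}(\Q(\zeta)/\Q)$, which is cyclic of order $p-1$ (for odd $p$) or order $2$ (for $p=2$), acting on $\zeta$ by the standard action on roots of unity. Since $X\in\mathcal{O}_F$, $X$ is fixed by this group. Applying each $\sigma\in\mathrm{Gal}(E/F)$ to $X=(1-\zeta)Y$ and taking the product over all $\sigma$, I obtain
\[
X^{[E:F]}=\Bigl(\prod_{\sigma}\sigma(1-\zeta)\Bigr)\,N_{E/F}(Y).
\]
For odd $p$ the first factor is $\prod_{k=1}^{p-1}(1-\zeta_p^k)=p$, and for $p=2$ it is $(1-i)(1+i)=2=p$. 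Thus in either case
\[
X^{[E:F]}=p\cdot N_{E/F}(Y),
\]
with $N_{E/F}(Y)\in\mathcal{O}_F$. In particular $p$ divides $X^{[E:F]}$ in $\mathcal{O}_F$.

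Finally, I would upgrade this to $p\mid X$ using unramifiedness. Because $\gcd(p,m)=1$, the prime $p$ is unramified in the cyclotomic field $F=\Q(\zeta_m)$, so the ideal $p\,\mathcal{O}_F$ factors into a product of \emph{distinct} prime ideals $\mathfrak{p}_1\cdots\mathfrak{p}_r$ in the Dedekind domain $\mathcal{O}_F$. Each $\mathfrak{p}_i$ is prime, hence radical, so $\mathfrak{p}_i\mid X^{[E:F]}$ implies $\mathfrak{p}_i\mid X$; taking the product, $p\,\mathcal{O}_F\mid X$, i.e.\ $X\equiv 0\pmod p$ in $\Z[\zeta_m]$, as required.

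The only delicate step is the last one: one has to know that $p\,\mathcal{O}_F$ is squarefree as an ideal, which is the statement that $p$ is unramified in $\Q(\zeta_m)$ when $p\nmid m$. This is a standard fact from the theory of cyclotomic fields, and it is exactly where the coprimality hypothesis $p\nmid m$ is used; without it one could only conclude that $p\mid X^{[E:F]}$, which is weaker than $p\mid X$. Everything else is bookkeeping with the two elementary product formulas above.
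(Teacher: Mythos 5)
Your proof is correct. It reaches the conclusion by a route that is genuinely different from the paper's, though both live in the same circle of ideas. The paper works upstairs in $R=\Z[\zeta_{pm}]$ (resp.\ $\Z[\zeta_{4m}]$): it uses the explicit factorization $(1-\zeta)R=\prod_i\mathfrak{p}_i$ into \emph{distinct} prime ideals, notes that $X$ lies in every $\mathfrak{p}_i$, contracts each $\mathfrak{p}_i$ to a prime $\mathfrak{q}_i$ of $\Z[\zeta_m]$, and concludes from $p\Z[\zeta_m]=\prod_i\mathfrak{q}_i$. You instead take the product of the Galois conjugates of the relation $X=(1-\zeta)Y$ over $\mathrm{Gal}(E/F)$ to get the clean identity $X^{[E:F]}=p\cdot N_{E/F}(Y)$ with $N_{E/F}(Y)\in\mathcal{O}_F$, and then descend from $p\mid X^{[E:F]}$ to $p\mid X$ using only that $p\mathcal{O}_F$ is a squarefree ideal. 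What your approach buys is that you never need the precise ramification data of $(1-\zeta)$ in the compositum (that each $\mathfrak{p}_i$ occurs to exponent exactly one in $(1-\zeta)R$, which is the slightly delicate part of the paper's first display); the only nontrivial inputs are the norm computations $\Phi_p(1)=p$, $(1-i)(1+i)=2$, and the unramifiedness of $p$ in $\Q(\zeta_m)$. What the paper's approach buys is that it identifies exactly which primes of $\Z[\zeta_m]$ contain $X$ and with what multiplicity, which is marginally more information, but none of it is needed for the lemma. Both arguments use the coprimality hypothesis $p\nmid m$ in an essential and equivalent way (distinctness of the $\mathfrak{p}_i$ in the paper, squarefreeness of $p\mathcal{O}_F$ in yours), and you correctly flag this as the crux.
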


\begin{proof}
In this proof, we use basic algebraic number theory as covered in \cite{ire}, for instance.
Let $R= \Z[\zeta_{pm}]$ if $p$ is odd and $R=\Z[\zeta_{4m}]$ if $p=2$.
The ideal $(1-\zeta)R$ of $R$ factorizes as
$$(1-\zeta)R= \prod_{i=1}^k \mathfrak{p}_i,$$
where $k= \varphi(m)/{\rm ord}_m(p)$ and the $\mathfrak{p}_i$'s are distinct prime ideals.
Furthermore, $p\Z[\zeta_m] = \prod_{i=1}^k \mathfrak{q}_i$, where $\mathfrak{q}_i=\mathfrak{p}_i^{p-1}$
and the $\mathfrak{q}_i$'s are prime ideals of $\Z[\zeta_m]$.
Note that $X\equiv 0\ (\bmod\ \mathfrak{p}_i)$ for all $i$, as $X\equiv 0\ (\bmod\ 1-\zeta)$ by assumption.
Since $X\in \Z[\zeta_m]$, this implies  $X\equiv 0\ (\bmod\ \mathfrak{q}_i)$ for all $i$ and hence
$X\equiv 0\ (\bmod\ p)$.
\end{proof}

\begin{lem} \label{gr}
Let $G$ be a finite abelian group, let $h$ be a positive integer,
and let $a_g$, $g\in G$, be elements of $\{\zeta_h^i: i=0,\ldots,h-1\}$.  
Consider the element
$D = \sum_{g \in G}a_g g$ of $\Z[\zeta_h][G]$
and the $G$-invariant matrix $H=(H_{g,k})$, $g,k\in G$ given by $H_{g,k}=a_{g-k}$.
Then $H$ is a $\BH(G,h)$ matrix if and only if
\begin{equation} \label{gr1}
DD^{(-1)}=|G|.
\end{equation}
Moreover, $(\ref{gr1})$ holds if and only if
\begin{equation} \label{gr2}
|\chi(D)|^2=|G| \text{ for all } \chi\in\hat{G}.
\end{equation}
\end{lem}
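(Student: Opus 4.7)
The plan is to show both equivalences by a direct bookkeeping argument, translating the matrix identity $HH^* = |G|I$ first into an equation between the coefficients of a group ring element and finally into a statement about characters.

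First I would unwind the definitions. Writing $G$ multiplicatively, the condition of being $G$-invariant means $H_{g,k}$ depends only on $gk^{-1}$, so $H_{g,k}=a_{gk^{-1}}$. The $(g,k)$ entry of $HH^*$ is then
\[
(HH^*)_{g,k}=\sum_{\ell\in G} a_{g\ell^{-1}}\,\overline{a_{k\ell^{-1}}}.
\]
Substituting $m=g\ell^{-1}$ (so $k\ell^{-1}=k g^{-1}m$) and setting $t=gk^{-1}$ yields
$(HH^*)_{g,k}=\sum_{m\in G} a_m\overline{a_{t^{-1}m}}$. On the other hand, expanding
$DD^{(-1)}=\sum_{g,k}a_g\overline{a_k}\,gk^{-1}$ and collecting terms shows that the coefficient of $t\in G$ in $DD^{(-1)}$ equals $\sum_{k\in G}a_{tk}\overline{a_k}$, which by a change of variable is exactly $\sum_{m\in G}a_m\overline{a_{t^{-1}m}}$. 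Hence $(HH^*)_{g,k}$ equals the coefficient of $gk^{-1}$ in $DD^{(-1)}$.

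From this, the equivalence between $HH^*=|G|I$ and $DD^{(-1)}=|G|\cdot 1_G$ is immediate: $HH^*=|G|I$ says the coefficient of $1_G$ in $DD^{(-1)}$ is $|G|$ and every other coefficient vanishes, which is exactly the group ring equation $DD^{(-1)}=|G|$.

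For the second equivalence, I would extend any character $\chi\in\hat G$ to a ring homomorphism $\Z[\zeta_h][G]\to\C$ and observe that, since $\chi(g^{-1})=\overline{\chi(g)}$ for all $g\in G$,
\[
\chi(D^{(-1)})=\sum_{g\in G}\overline{a_g}\,\overline{\chi(g)}=\overline{\chi(D)},
\]
so $\chi(DD^{(-1)})=\chi(D)\overline{\chi(D)}=|\chi(D)|^2$. If $DD^{(-1)}=|G|$, then $|\chi(D)|^2=\chi(|G|)=|G|$ for every $\chi\in\hat G$. Conversely, if $|\chi(D)|^2=|G|$ for every $\chi$, then $\chi(DD^{(-1)})=\chi(|G|)$ for every $\chi\in\hat G$, so by Result \ref{fourier} applied to the elements $DD^{(-1)}$ and $|G|$ of $\C[G]$, we get $DD^{(-1)}=|G|$. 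This completes both equivalences.

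There is no real obstacle: the proof is entirely formal, and the only point that requires a little care is the index substitution that identifies $(HH^*)_{g,k}$ with the coefficient of $gk^{-1}$ in $DD^{(-1)}$. Everything else follows from the multiplicativity of characters and Result \ref{fourier}.
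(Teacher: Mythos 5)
Your proof is correct and follows essentially the same route as the paper: identify the $(g,k)$ entry of $HH^*$ with the coefficient of $gk^{-1}$ in $DD^{(-1)}$ by an index substitution, then derive the character criterion from $\chi(DD^{(-1)})=|\chi(D)|^2$ and the Fourier inversion result. The only cosmetic difference is that you write the group multiplicatively and spell out the character step, which the paper leaves as an appeal to Result \ref{fourier}.
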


\begin{proof}
Let $g\in G$ be arbitrary. The coefficient of $g$ in $DD^{(-1)}$ is
\begin{equation*}
\sum_{k,l \in G \atop k-l=g}a_{k}\overline{a_l}
= \sum_{l\in G} a_{l+g}\overline{a_{l}}.
\end{equation*}
On the other hand, the inner product of  row $x+g$ and row $x$ of $H$ is
\begin{equation*}
\sum_{k\in G} H_{x+g,k}\overline{H_{x,k}}
= \sum_{k\in G} a_{x+g-k}\overline{a_{x-k}}
 = \sum_{l\in G} a_{l+g}\overline{a_{l}}
\end{equation*}
Hence (\ref{gr1}) holds if and only if any two distinct rows of $H$ have inner product $0$,
that is, if and only if $H$ is a $\BH(G,h)$ matrix.
Finally, the equivalence of (\ref{gr1}) and (\ref{gr2}) follows from Result \ref{fourier}.
\end{proof}

\begin{re}[\cite{sc1}, Thm.\ 2.2.2] \label{fdp}
Let $p$ be a prime, let $a,b,h$ be positive integers with $(h,p)=1$, and write $v=p^ah$.
Suppose that $X$ is an element of $\Z[\zeta_{v}]$ with $|X|^2=p^b$.
Then there exist $Y\in \Z[\zeta_h]$,
a root of unity $\eta\in \Z[\zeta_h]$, $\eta\neq 1$,
of order dividing $p-1$, and an integer
$j$ such that either
$$X = \zeta_v^j Y  \text{ or } X=\zeta_v^j\Theta Y,$$
where $\Theta=1-\zeta_4$ if $p=2$ and $\Theta = \sum_{i=0}^{p-2}\eta^{-i}\zeta_p^{t^i}$ if $p$ is odd,
and $t$ is a primitive element modulo $p$.
\end{re}

\begin{thm} \label{necpp}
Let $p$ be a prime and let $a,h$ be positive integers.
If a $\BH(\Z_{p^a},h)$ matrix exists, then $p$ divides $h$.
\end{thm}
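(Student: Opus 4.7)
The plan is a proof by contradiction using the field descent method (Result \ref{fdp}) applied to a faithful character of $\Z_{p^a}$, combined with Galois invariance over $\Q(\zeta_h)$. Suppose for contradiction that a $\BH(\Z_{p^a},h)$ matrix exists with $p\nmid h$. By Lemma \ref{gr}, there is $D=\sum_{i\in \Z_{p^a}}a_i x^i \in \Z[\zeta_h][\Z_{p^a}]$ with each $a_i\in U_h$ and $|\chi(D)|^2=p^a$ for every character $\chi$ of $\Z_{p^a}$.

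Take the faithful character with $\chi(x)=\zeta_{p^a}$. Then $\chi(D)\in\Z[\zeta_{p^a h}]$ with $|\chi(D)|^2=p^a$, so Result \ref{fdp} (with $v=p^a h$, $b=a$) gives either (C1) $\chi(D)=\zeta_{p^a h}^j Y$ with $Y\in\Z[\zeta_h]$, $|Y|^2=p^a$; or (C2) $\chi(D)=\zeta_{p^a h}^j\Theta Y$ with $Y\in\Z[\zeta_h]$, $|Y|^2=p^{a-1}$. I will contradict each case.

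In (C1), decompose $\zeta_{p^a h}^j=\zeta_{p^a}^\alpha \zeta_h^\beta$ and absorb the $\zeta_h^\beta$ factor into $Y$, giving $\zeta_{p^a}^{-\alpha}\chi(D)=Y'\in\Z[\zeta_h]$. Fixedness under $\mathrm{Gal}(\Q(\zeta_{p^a h})/\Q(\zeta_h))\cong (\Z/p^a)^*$ (where $\sigma_t:\zeta_{p^a}\mapsto\zeta_{p^a}^t$) says the function $t\mapsto\sum_i a_i\zeta_{p^a}^{(i-\alpha)t}$ is constant on $(\Z/p^a)^*$. A Ramanujan-sum Fourier inversion shows this constrains $a_i$ to be constant on each coset of $p^{a-1}\Z_{p^a}$ in $\Z_{p^a}$, except possibly at the single index $i=\alpha$. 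Using $\sum_{k=0}^{p-1}\zeta_{p^a}^{kp^{a-1}}=0$, direct computation collapses $Y'$ to $a_\alpha - a_{\alpha+p^{a-1}}$, so $|a_\alpha-a_{\alpha+p^{a-1}}|^2=p^a$. Since this is the squared distance between two elements of $U_h$, it is at most $4$; hence $p^a\le 4$. In each remaining case $p^a\in\{2,3,4\}$, the required ratio $a_\alpha/a_{\alpha+p^{a-1}}$ is a root of unity of order divisible by $p$, contradicting $p\nmid h$.

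In (C2), $\chi(D)=\zeta_{p^a}^\alpha\Theta Y'$ with $|Y'|^2=p^{a-1}$. The key obstacle is that $\Theta$ involves $\zeta_p$ (odd $p$) or $\zeta_4$ ($p=2$), neither of which lies in $\Z[\zeta_h]$ when $p\nmid h$. In the cleanest subcase $p=2$, $a=1$, $\chi(D)\in\Z[\zeta_{2h}]=\Z[\zeta_h]$ (since $h$ is odd), while $\zeta_{2h}^j(1-\zeta_4)Y$ has a nonzero $\zeta_4$-part unless $Y=0$, contradicting $|Y|^2=1$. For general $p,a$, a parallel Galois-Fourier analysis using $\sigma_t(\Theta)=\eta^{\log_s(t\bmod p)}\Theta$ for odd $p$ (where $\log_s$ denotes the discrete logarithm with respect to a primitive root $s$ mod $p$) imposes on the coefficients $a_i$ a Gauss-sum structural identity forcing many ratios $e/\omega$ (with varying $\omega$ running through distinct powers of $\eta$) to be primitive third roots of unity simultaneously; this overdetermined system is impossible unless $\eta=1$, contradicting the choice $\eta\ne 1$ guaranteed by Result \ref{fdp}. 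Both (C1) and (C2) being impossible, no $\BH(\Z_{p^a},h)$ matrix exists when $p\nmid h$, so $p\mid h$. The principal obstacle is (C2): the presence of $\Theta$ makes the Galois-Fourier bookkeeping substantially more delicate than in (C1), though the same ingredients --- field descent, Galois invariance over $\Q(\zeta_h)$, and bounds on sums of roots of unity --- drive both arguments.
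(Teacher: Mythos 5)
Your overall strategy is the same as the paper's: apply the field descent (Result \ref{fdp}) to the image of $D$ under a faithful character, split into the two cases $\chi(D)=\zeta_{p^ah}^jY$ and $\chi(D)=\zeta_{p^ah}^j\Theta Y$, and exploit linear independence over $\Q(\zeta_h)$. Your Case (C1) can be made rigorous: the "Ramanujan-sum Fourier inversion" is really just the statement that $\{1,\zeta_{p^a},\dots,\zeta_{p^a}^{\varphi(p^a)-1}\}$ is a $\Q(\zeta_h)$-basis of $\Q(\zeta_{p^ah})$ when $p\nmid h$, which forces the coefficients to be constant on cosets of $p^{a-1}\Z_{p^a}$ away from one index and collapses $\chi(D)$ to a difference of two $h$th roots of unity; the bound $p^a\le 4$ and the order computations then go through. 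This is essentially the paper's Case 2 (after its reduction to $a=1$), reached by a slightly different route.

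The genuine gap is Case (C2), which you yourself flag as "the principal obstacle" but then do not actually prove except for $p=2$, $a=1$. For odd $p$ (and for $p=2$, $a\ge2$) you offer only a description of a hoped-for argument: a "Gauss-sum structural identity forcing many ratios \dots to be primitive third roots of unity simultaneously," claimed to be "impossible unless $\eta=1$." No such identity is derived, it is not stated what the overdetermined system is, and the claimed mechanism does not match what actually happens; in particular you never invoke the trivial character, which is where the contradiction really lives. The paper closes this case concretely: first it reduces to $a=1$ by noting that $\Theta Y\in\Z[\zeta_{ph}]$ and that $\{1,\zeta_{p^a},\dots,\zeta_{p^a}^{p^{a-1}-1}\}$ is linearly independent over $\Q(\zeta_{ph})$, so $\chi(D)=\sum_{j=0}^{p-1}\zeta_h^{b_j}\zeta_p^j$ and $|\chi(D)|<p$, forcing $a=1$; then it observes $\Theta\equiv\sum_{i=0}^{p-2}\eta^{-i}\equiv 0\ (\bmod\ 1-\zeta_p)$ because $\eta\ne1$ has order dividing $p-1$, hence $\chi_0(D)=\sum_j\zeta_h^{b_j}\equiv\chi(D)\equiv 0\ (\bmod\ 1-\zeta_p)$, and by Lemma \ref{div} this rational-side sum is divisible by $p$ in $\Z[\zeta_h]$, so $|\chi_0(D)|^2\equiv 0\ (\bmod\ p^2)$, contradicting $|\chi_0(D)|^2=p$. (For $p=2$ the paper avoids the case split altogether by noting $\Theta=1-\zeta_4$, so $\chi(D)\in\Z[\zeta_{4h}]$ in both cases, and running one argument in $\Z[\zeta_{4h}]$.) Without an argument of this kind, your Case (C2) — and hence the theorem for odd $p$ and for $p=2$ with $a\ge 2$ — remains unproved.
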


\begin{proof}
Suppose that a $\BH(\Z_{p^a},h)$ exists and that $p$ does not divide $h$.
Write $G=\Z_{p^a}$ and let $g$ be generator of $G$.
By Lemma \ref{gr}, there is $D\in \Z[\zeta_h][G]$, $D=\sum_{i=0}^{p^a-1}\zeta_h^{a_i}g^i$, with
\begin{equation} \label{necpp1}
\left|\chi(D)\right|^2 = p^a
\end{equation}
for all $\chi\in \hat{G}$. Let $\chi$ be the character of $G$ with $\chi(g)=\zeta_{p^a}$.
By Result \ref{fdp}, we have
$\chi(D) = \zeta_{p^ah}^k Y$  or $\chi(D)=\zeta_{p^ah}^k\Theta Y$
for some integer $k$,
where $Y\in \Z[\zeta_h]$ and $\Theta$ is defined in  Result \ref{fdp}.
Replacing $D$ by $\zeta_h^cg^dD$ with suitable integers $c,d$, if necessary, we can assume $k=0$.
Hence
\begin{equation} \label{necpp2a}
\chi(D) \in \Z[\zeta_h] \text{ or } \chi(D)=\Theta Y.
\end{equation}

\medskip

We first assume $p=2$. In this case, $h$ is odd and we have $\chi(D)\in \Z[\zeta_{4h}]$
by (\ref{necpp2a}). If $a=1$, it is easy to show that $h$ is divisible by $4$. Hence we can assume $a\ge 2$.
Write $D=\sum_{i=0}^{2^{a-2}-1} g^i \sum_{j=0}^3 \zeta_h^{a_{i,j}} g^{2^{a-2}j}$
with $a_{i,j}\in \Z$. We have
$$
\chi(D) = \sum_{i=0}^{2^{a-2}-1} \zeta_{2^a}^i \sum_{j=0}^3 \zeta_h^{a_{i,j}} \zeta_4^j.
$$
As  $\chi(D)\in \Z[\zeta_{4h}]$ and $\{1,\zeta_{2^a},\ldots,\zeta_{2^a}^{2^{a-2}-1}\}$ is linearly
independent over $\Q(\zeta_{4h})$, we conclude
\begin{equation*}
\begin{split}
\chi(D) & = \sum_{j=0}^3 \zeta_h^{a_{0,j}} \zeta_4^j= A + B \zeta_4,\\
\end{split}
\end{equation*}
where  $A=\zeta_h^{a_{0,0}} - \zeta_h^{a_{0,2}}$
and $B= \zeta_h^{a_{0,1}} - \zeta_h^{a_{0,3}}$.
We conclude
$$
2^a = |\chi(D)|^2 = (A+B\zeta_4)(\bar{A}-\bar{B}\zeta_4)
= A\bar{A} + B\bar{B} + (-A\bar{B} + B\bar{A})\zeta_4.
$$
As $\{1,\zeta_4\}$ is linearly independent over $\Q(\zeta_h)$,
this implies $-A\bar{B} + B\bar{A}=0$ and $A\bar{A} + B\bar{B}=2^a$.
As $h$ is odd, we have $|A|, |B| <2$ and thus $A\bar{A} + B\bar{B}<8$.
Hence $a= 2$ and $A\bar{A} + B\bar{B}=|\chi(D)|^2=4$.
A quick computation shows that this implies $\eta +\bar{\eta} + \gamma +\bar{\gamma}=0$,
where $\eta = \zeta_h^{a_{0,0}-a_{0,2}}$ and $\gamma = \zeta_h^{a_{0,1}-a_{0,3}}$.
Hence ${\rm Re}(\eta)=-{\rm Re}(\gamma)$, where ${\rm Re}(z)$ denotes the real part of $z\in \C$.
Write $\eta=\zeta_h^c$ and $\gamma = \zeta_h^d$ with $c,d\in \Z$.
We have ${\rm Re}(\eta)=\cos(2\pi i c/h)$ and ${\rm Re}(\gamma)=\cos(2\pi i d/h)$
and thus ${\rm Re}(\eta)=-{\rm Re}(\gamma)$ implies $2\pi d/h = \pm 2\pi c/h + k\pi$
where $k$ is an odd integer. This is only possible if $h$ is even.
The proof is complete for $p=2$.

\medskip
\noindent

Now assume that $p$ is odd. We rewrite $D$ in the form
$$
D = \sum_{i=0}^{p^{a-1}-1} \sum_{j=0}^{p-1} \zeta_h^{a_{i,j}} g^{i+jp^{a-1}},
$$
where the $a_{i,j}$'s are integers with $0\le a_{i,j}\le p-1$.
Note that
\begin{equation} \label{necpp2}
\chi(D) =  \sum_{i=0}^{p^{a-1}-1} \zeta_{p^a}^i \sum_{j=0}^{p-1} \zeta_h^{a_{i,j}} \zeta_p^j.
\end{equation}
As $\{1,\zeta_{p^a},\ldots,\zeta_{p^a}^{p^{a-1}-1}\}$ is linearly
independent over $\Q(\zeta_{ph})$ and  $\chi(D) \in \Z[\zeta_{ph}]$
by (\ref{necpp2a}),
we conclude that $\sum_{j=0}^{p-1} \zeta_h^{a_{i,j}} \zeta_p^j=0$ for all $i>0$ and thus
\begin{equation} \label{necpp4}
\chi(D) =   \sum_{j=0}^{p-1} \zeta_h^{b_j} \zeta_p^j,
\end{equation}
where $b_j=a_{0,j}$.
This implies $\left|\chi(D)\right|<p$ and thus $a=1$ by (\ref{necpp1}). In particular, $g$ has order $p$
and we have
\begin{equation} \label{necpp4a}
D =  \sum_{j=0}^{p-1} \zeta_h^{b_j} g^j.
\end{equation}
According to (\ref{necpp2a}), it suffices the consider the following two cases.

\medskip
\noindent
{\bf Case 1} $\chi(D)=\Theta Y$.
Recall that $\Theta = \sum_{i=0}^{p-2}\eta^{-i}\zeta_p^{t^i}$ and note that
$$
\Theta \equiv \sum_{i=0}^{p-2}\eta^{-i}\equiv 0\ (\bmod\ 1-\zeta_p),
$$
since $\eta\neq 1$ and the order of $\eta$ divides $p-1$. We conclude
$$
\sum_{i=0}^{p-1}\zeta_h^{b_i}
\equiv \sum_{i=0}^{p-1}\zeta_h^{b_i}\zeta_p^i
\equiv \chi(D) \equiv \Theta Y
\equiv 0\ (\bmod\ 1-\zeta_p).
$$
By Lemma \ref{div}, this implies
 \begin{equation} \label{necpp6}
 \sum_{j=0}^{p-1} \zeta_h^{b_j}\equiv 0\ (\bmod\ p).
 \end{equation}
However, for the trivial character $\chi_0$ of $G$,
 we have $\chi_0(D)= \sum_{j=0}^{p-1} \zeta_h^{b_j}$ by (\ref{necpp4a}) and  $|\chi_0(D)|^2=p$ by (\ref{necpp1}).
 But (\ref{necpp6}) implies $|\chi_0(D)|^2\equiv 0\ (\bmod\ p^2)$, a contradiction.

\medskip
\noindent
{\bf Case 2} $\chi(D) \in \Z[\zeta_h]$.
By (\ref{necpp4}),
we have $\chi(D)= \sum_{j=0}^{p-2}(\zeta_h^{b_j}-\zeta_h^{b_{p-1}})\zeta_p^j$
and since $\chi(D) \in \Z[\zeta_h]$ and $\{1,\zeta_p,\ldots,\zeta_p^{p-2}\}$ is linearly independent over $\Q(\zeta_h)$,
we conclude $b_1=\cdots = b_{p-1}$ and $\chi(D) = \zeta_h^{b_0}-\zeta_h^{b_{p-1}}$.
As $|\chi(D)|^2 =p$ and $p$ is odd, this implies $p=3$.
Replacing $D$ by $\zeta_h^{-b_0}D$, if necessary, we can assume $\chi(D)= 1- \zeta_h^u$
for some integer $u$. Note that $\zeta_h^u\neq 1$.
We have $3=|\chi(D)|^2 = 2 - \zeta_h^u - \zeta_h^{-u}$
and thus $\zeta_h^{u}+\zeta_h^{2u}+1=0$, which implies
$\zeta_h^{3u}=1$. As $\zeta_h^u\neq 1$, we conclude $h \equiv 0\ (\bmod\ 3)$,
contradicting the assumption that $p$ does not divide $h$.
\end{proof}

\medskip

\begin{thm} \label{necsuff}
Let $v$ be a power of a prime $p$ and let $h$ be a positive integer.
A (circulant) $\BH(\Z_v,h)$ matrix exists if and only if
\begin{equation} \label{necsuff1}
\nu_p(h) \geq \lceil \nu_p(v)/2 \rceil  \text{ and  } (v,\nu_2(h))\neq (2,1).
\end{equation}
\end{thm}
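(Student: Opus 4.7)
For sufficiency, the plan is to invoke Corollary~\ref{circ} with $v = p^a$. Since $p$ is the only prime divisor of $v$, condition~(i) of that corollary reduces to $\nu_p(h) \geq \lceil a/2 \rceil$. Condition~(ii) is nontrivial only when $v \equiv 2 \pmod{4}$, i.e.\ $v = 2$, where it demands $\nu_2(h) \geq 2$; under (\ref{necsuff1}) we have both $\nu_2(h) \geq 1$ and $\nu_2(h) \neq 1$, so this holds.

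For necessity, assume a $\BH(\Z_{p^a},h)$ matrix exists. Lemma~\ref{gr} provides $D = \sum_{i=0}^{p^a-1} \zeta_h^{a_i} g^i \in \Z[\zeta_h][\Z_{p^a}]$ with $|\chi(D)|^2 = p^a$ for every character $\chi$, and Theorem~\ref{necpp} gives $s := \nu_p(h) \geq 1$. Write $h = p^s h_0$ with $\gcd(h_0,p)=1$. Take the faithful character $\chi$ with $\chi(g) = \zeta_{p^a}$, so $\chi(D) \in \Z[\zeta_{p^a h_0}]$. Applying Result~\ref{fdp} and replacing $D$ by $\zeta_h^c g^d D$ for suitable $c,d$ (which preserves the BH property), one may assume $\chi(D) = Y$ (Case~1) or $\chi(D) = \Theta Y$ (Case~2) with $Y \in \Z[\zeta_{h_0}]$.

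The main lever is a basis decomposition. Since $\{\zeta_{p^a}^r : 0 \leq r < p^{a-s}\}$ is a $\Q(\zeta_h)$-basis of $\Q(\zeta_{p^a h_0})$ and $\zeta_{p^a}^{kp^{a-s}} = \zeta_{p^s}^k = \zeta_h^{kh_0}$, writing $i = r + kp^{a-s}$ yields
\[
\chi(D) = \sum_{r=0}^{p^{a-s}-1} \zeta_{p^a}^r Y_r, \qquad Y_r = \sum_{k=0}^{p^s-1} \zeta_h^{a_{r+kp^{a-s}} + kh_0},
\]
with each $Y_r$ a sum of $p^s$ roots of unity. Whenever $\chi(D) \in \Z[\zeta_h]$, linear independence forces $Y_r = 0$ for $r > 0$, so $|Y_0| = |\chi(D)| = p^{a/2}$; the triangle inequality then gives $p^{a/2} \leq p^s$, i.e.\ $s \geq \lceil a/2 \rceil$. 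This covers Case~1 always, and Case~2 whenever $\Theta \in \Z[\zeta_h]$ (automatic for $p$ odd, and for $p = 2$ when $s \geq 2$).

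The remaining delicate situations are $p = 2$ with $s = 1$. The side condition $(v, \nu_2(h)) \neq (2,1)$, i.e.\ the case $v = 2$, comes from a direct computation: $|\zeta_h^{a_0} - \zeta_h^{a_1}|^2 = 2$ forces $\zeta_h^{a_0-a_1} = \pm \zeta_4$, hence $4 \mid h$, contradicting $s=1$. For $a \geq 2$ in Case~2 with $p = 2, s = 1$: from $\chi(D) = (1-\zeta_4)Y = Y - \zeta_{2^a}^{2^{a-2}}Y$ and the basis decomposition over $\Z[\zeta_{h_0}] = \Z[\zeta_h]$, the identity $Y_0 = \zeta_h^{a_0} - \zeta_h^{a_{2^{a-1}}} = Y$ together with $|Y| = 2^{(a-1)/2}$ and $|Y_0| \leq 2$ forces $a \leq 3$. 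For $a \geq 4$ this is immediate; for $a = 3$ (where $\lceil 3/2 \rceil = 2 > 1$ still requires a contradiction), the triangle-equality constraints rigidly determine the $a_i$ modulo $h$, and substituting this pattern into the vanishing of the coefficient of $g^2$ in $DD^{(-1)} = 8$ yields a parity contradiction of the form $2(a_3 - a_1) \equiv h_0 \pmod{2h_0}$, impossible since the left side is even and $h_0$ is odd. The principal obstacle is precisely this $a = 3, s = 1, p = 2$ borderline case, where the single-character triangle bound is not sharp enough and one must invoke the full group ring equation to close the argument.
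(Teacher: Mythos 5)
Your proof is correct and follows essentially the same route as the paper's: Result~\ref{fdp} to descend $\chi(D)$ for the faithful character into a smaller cyclotomic ring, the power-basis decomposition of $\Q(\zeta_{p^ah_0})$ over $\Q(\zeta_h)$ to force $\chi(D)$ to equal a sum of $p^{\nu_p(h)}$ roots of unity, and the triangle inequality against $|\chi(D)|=p^{a/2}$. The only difference is organizational: the paper disposes of $p=2$, $a\le 4$ up front via a separate $\chi(D)=A+B\zeta_4$ computation and then assumes $\nu_2(h)\ge 2$ in the main argument, whereas you carry the factor $\Theta=1-\zeta_4$ through the basis decomposition uniformly and close the one borderline case $p=2$, $a=3$, $\nu_2(h)=1$ with the coefficient of $g^2$ in $DD^{(-1)}$ (the paper uses $\chi_0(D)$ there instead); your version of that computation checks out.
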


\begin{proof}
By  Corollary \ref{circ}, condition (\ref{necsuff1}) is sufficient for the existence of a $\BH(\Z_v,h)$ matrix.
It remains to prove the necessity of (\ref{necsuff1}). 
Assume that a $\BH(\Z_v,h)$ exists and write $v=p^a$,
where $a$ is a positive integer. Write $G=\Z_{p^a}$ and let $g$ be generator of $G$.
By Lemma \ref{gr}, there is $D\in \Z[\zeta_h][G]$ with $D=\sum_{i=0}^{p^a-1}\zeta_h^{a_i}g^i$ and
$DD^{(-1)}=v$.

\medskip

We first show that we can assume $a\ge 5$ if $p=2$.
Suppose $p=2$.
Then $h$ is even by Theorem \ref{necpp}.
If $a=1$, then $2=DD^{(-1)} = 2 + (\zeta_h^{a_0-a_1}+\zeta_h^{a_1-a_0})g$
and hence $\zeta_h^{a_0-a_1}+\zeta_h^{a_1-a_0}=0$. This implies $\zeta_h^{2a_0-2a_1}=-1$
and thus $h \equiv 0\ (\bmod\ 4)$. This shows that (\ref{necsuff1}) is necessary in the case $a=1$.
As $h$ is even, condition (\ref{necsuff1}) is also necessary in the case $a=2$.
Now suppose $a\in \{3,4\}$. To prove the necessity of (\ref{necsuff1}), it suffices
to show that $\nu_2(h)=1$ is impossible. Thus assume $\nu_2(h)=1$.
Similar to the case $p=2$ in the proof of Theorem \ref{necpp}, we see that we
can assume $\chi(D) \in \Z[\zeta_{2h}]$ where $\chi$ is the character of $G$ with $\chi(g)=\zeta_{2^a}$
(note that we have $\chi(D) \in \Z[\zeta_{2h}]$ and
not only $\chi(D) \in \Z[\zeta_{4h}]$, as  $h$ is even now).
Furthermore, as in the proof of Theorem \ref{necpp}, we have
$D=\sum_{j=0}^3 \zeta_h^{a_{0,j}}g^j$,
$A\bar{A}+B\bar{B} = 2^a$ where $A=\zeta_h^{a_{0,0}} - \zeta_h^{a_{0,2}}$
and $B= \zeta_h^{a_{0,1}} - \zeta_h^{a_{0,3}}$.
This implies $a=3$ and $|A|=|B|=2$.
This is only possible if $\zeta_h^{a_{0,2}}=-\zeta_h^{a_{0,0}}$
and $\zeta_h^{a_{0,1}}=-\zeta_h^{a_{0,3}}$.
But then $\chi_0(D) = \sum_{j=0}^3 \zeta_h^{a_{0,j}}=0$
for the trivial character $\chi_0$ of $G$, a contradiction.
In summary, we have shown that (\ref{necsuff1}) is necessary if $p=2$ and $a\le 4$.

\medskip

By what we have shown, we can assume $a\ge 5$ if $p=2$.
Suppose that condition (\ref{necsuff1})  is not satisfied.
Set $b=\nu_p(h)$ if $p$ is odd and $b=\max\{2,\nu_2(h)\}$ if $p=2$.
By Theorem \ref{necpp}, we have $b\ge 1$.
Moreover, $\nu_p(h)<a/2$, as we assume that  (\ref{necsuff1})  does not hold.
Note that, for $p=2$, this implies $b<a/2$, as $a\ge 5$ and thus $a/2>2$ is this case.
In summary, we have
\begin{equation} \label{necsuff1a}
\text{$1\le b<a/2$  if  $p$ is odd and $2\le b<a/2$ if $p=2$.}
\end{equation}

\medskip

Let $\chi$ be the character of $G$ with $\chi(g)=\zeta_{p^a}$.
Note that $|\chi(D)|^2=p^a$ by Lemma \ref{gr}. 
Write $h=p^bh'$ where $(h',p)=1$. 
By Result \ref{fdp}, there is an integer $s$ such
$\chi(D)\zeta_{p^a}^s\in \Z[\zeta_{ph'}]$ if $p$ is odd
and $\chi(D)\zeta_{p^a}^s\in \Z[\zeta_{4h'}]$ if $p=2$. 
In view of (\ref{necsuff1a}), this implies $\chi(D)\zeta_{p^a}^s\in \Z[\zeta_{h}]$.
Replacing $D$ by $Dg^{-s}$, if necessary,
we can assume
\begin{equation} \label{necsuff1b}
\chi(D)\in \Z[\zeta_h].
\end{equation}
Write $h=p^bm$, where $(p,m)=1$.
Every integer in $\{0,\ldots,p^a-1\}$ has a unique representation as $i+p^{a-b}j$ with $0\le i \le p^{a-b}-1$ and $0\le j \le p^b-1$.
Hence we can write
$$
D = \sum_{i=0}^{p^{a-b}-1}\sum_{j=0}^{p^b-1} \zeta_{p^b}^{a_{i,j}} \zeta_{m}^{b_{i,j}} g^{i+p^{a-b}j},
$$
with $a_{i,j}, b_{i,j} \in \Z$   and we have
$$
\chi(D) = \sum_{i=0}^{p^{a-b}-1}\zeta_{p^a}^i\sum_{j=0}^{p^b-1} \zeta_{p^b}^{a_{i,j}+j} \zeta_{m}^{b_{i,j}}.
$$
In view of (\ref{necsuff1b}), and since $\{1,\zeta_{p^a},\ldots,\zeta_{p^a}^{p^{a-b}-1}\}$ is linearly independent
over $\Q(\zeta_{p^bm})$, we have $\sum_{j=0}^{p^b-1} \zeta_{p^b}^{a_{i,j}+j} \zeta_{m}^{b_{i,j}}=0$
for all $i>0$. We conclude $\chi(D)=\sum_{j=0}^{p^b-1} \zeta_{p^b}^{a_{0,j}+j} \zeta_{m}^{b_{0,j}}$
and hence $|\chi(D)|^2 \le p^{2b}$.
Thus $|\chi(D)|^2<p^a$ by (\ref{necsuff1a}), contradicting $|\chi(D)|^2=p^a$.
\end{proof}

\bigskip

\noindent
{\bf Acknowledgement}
We thank the anonymous referees for their careful reading of 
the manuscript and their suggestions that helped
to improve the readability of the paper.


\end{document}